\newtheorem{teo}{Theorem}
\newtheorem{rem}{Remark}
\newtheorem{prop}{Proposition}
\newtheorem{cor}{Corollary}
\newtheorem{tdef}{Definition}
\numberwithin{equation}{section}
\numberwithin{equation}{section}
\title{Macdonald formula, Ricci Curvature, and Concentration Locus for classical compact Lie groups}
\author{S.L.~Cacciatori$^{1,2}$ and P.~Ursino$^{3}$}
\address{$^1$ Department of Science and High Technology, Universit\`a dell'Insubria, Via Valleggio 11, IT-22100 Como, Italy}
\address{$^2$ INFN sezione di Milano, via Celoria 16, IT-20133 Milano, Italy}
\address{$^3$ Department of Mathematics and Informatics, Universit\`a degli Studi di Catania, Viale Andrea Doria 6, 95125 Catania, Italy}
\begin{document}

\maketitle
\begin{abstract}

For Classical compact Lie groups, we use Macdonald's formula \cite{Ma} and Ricci curvature for analyzing a ``concentration locus'', which is a tool to detect where a sequence of metric, Borel measurable spaces concentrates its measure.

\end{abstract}

\section*{Introduction}

\noindent

In the last fifty years, the study of concentration of measure phenomenon has become a research field of powerful interest in different areas of Mathematics. Starting from a pioneering research of Levy in the fifties \cite{Le}, Milman's work in the early seventies, followed by Gromov later work \cite{GrMi}, the notion of Levy Family has been used to study concentration phenomenon (for an exhaustive survey on the subject see \cite{Gro99}, \cite{Pe1},\cite{ledoux} or the most recent \cite{Shi} )

This phenomenon is mostly an asymptotic effect that can be view even in finite dimensional spaces.
In \cite{CU21} we introduced the notion of Concentration Locus, a some kind of ``localized" version of concentration. In a sense, we detect in which part of the spaces, along the process of concentration, the measure concentrates. In the present paper,
we will provide explicit examples showing how such sort of localization takes place for compact Lie groups. We will show different techniques apt to do it:
we use a formula due to Macdonald \cite{Ma}, for computing the volumes of compact simple groups and their subgroups, combined with techniques of realizing group parametrizations, first developed in \cite{CaCeDVOrSc} to infer concentration properties
and to calculate explicitly a concentration locus for some classical lie groups. We also compute the Ricci curvature of such groups and apply Gromov's theorem to deduce Levy property for them. The explicit determination of a concentration locus for the classical
sequence of Lie groups will also show explicitly that it cannot be unique. \\
Finally, we extend our results to families of arbitrary compact connected Lie groups.


\section{Background and statements}
\subsection{Levy Family and Concentration Locus}

\begin{tdef}
For a set $A$ in a metric space $X$ we denote by $N^{\varepsilon}(A)$, $\varepsilon >0$, its e-neighborhood. Consider a family $(X_n,\mu_n)$ with $n:1,2, \dots$ of metric spaces $X_n$ with normalized borel measures $\mu_n$. We call such a family Levy if for any sequence of Borel sets $A_n\subset X_n$  $n:1,2, \dots$, such that $\liminf_{n\rightarrow \infty}\ \mu_n(A_n)>0$, and for every $\varepsilon >0$, we have
$\lim_{n\rightarrow \infty}\ \mu_n(N^{\varepsilon}(A))=1$.
\end{tdef}
\begin{tdef}\label{def2}
 Let $\{X_n, \mu_n\}_{n\in \mathbb N}$ be a family of connected metric spaces with metrics $g_n$, and $\mu_n$ measures w.r.t. which open set are measurable of non-vanishing measure. Assume the measures to be normalized, $\mu_n(X_n)=1$.
 Let $\{S_n\}_{n\in \mathbb N}$ be a family of proper closed subsets, $S_n\subset X_n$. Fix a sequence $\{\varepsilon_n\}_{n\in \mathbb N}$ such that $\varepsilon_n>0$,  $\lim_{n\to\infty}\varepsilon_n=0$, and let
 $\{U^{\varepsilon_n}_n\}_{n\in \mathbb N}$ be the sequence of tubular neighbourhoods of $S_n$ of radius $\varepsilon_n$. We say that the measure concentrates on the family
 $\{S_n\}$ at least at rate of $\varepsilon_n$ if
 \begin{align}
 \lim_{n\to\infty} \mu_n(X_n-U^{\varepsilon_n}_n)=0.
 \end{align}
 We will shortly say that the measure concentrates on $S_n$ and will call it metric concentration. In particular, when $X_n$ are manifolds, we call $S_n$ a concentration locus if it is contained in a submanifold of strictly positive codimension for
 any $n$.
 Moreover, if such a sequence $\varepsilon_n$ converges to 0 at rate $k$ (so that $\lim_{n\to\infty} n^k\varepsilon_n=c$ for some constant $c$), we say that the
 measure concentrates on the family $\{S_n\}$ at least at rate $k$.
\end{tdef}

Notice that in general we may have $\mu_n(S_n)=0$.
Moreover, with this definitions we do not need any notion of convergence of $S_n$ to a final subset. $S_n$ just gives a ``direction of concentration''. Also, our definitions do not pretend to provide any optimality in concentration: it can happen that for
a given sequence of $S_n$ it exists a sequence of proper subsets $S'_n\subset S_n$ on which we still have concentration.
A special example of Definition \ref{def2} consists in the case $X_n=X$ and $S_n=S$ where $X$ is a metric space, $\{\mu_n\}_{n\in\mathbb N}$ a sequence of normalized measures on $X$, compatible with the metric of $X$ and $S\subset X$ a proper closed subset of $X$.


\subsection{The Macdonalds formula}\label{app:Macdonalds}
Let us consider any simple compact Lie algebra of dimension $d$ and rank $r$. It is characterised by $p=(d-r)/2$ positive roots $\alpha_i$, $i=1,\ldots,p$ from which one can pick out a fundamental set of simple roots, say $\alpha_i$, $i=1,\ldots,r$.
To each non vanishing root $\alpha_i$ is associated a coroot
\begin{align}
 \check \alpha=\frac {2\alpha}{(\alpha|\alpha)},
\end{align}
being $(|)$ the scalar product, induced by the Killing form, on the real form $H_{\mathbb R}^*$ of the dual $H^*$ of the Cartan subalgebra $H$. The simple coroots define a lattice whose fundamental cell represents the fundamental torus $T^r$.
The polynomial invariants of the algebra (and groups) are generated by $r$ fundamental invariants of degree $d_i$, $i=1,\ldots,r$, depending on the algebra.\\
For such a Lie algebra there can be several compact Lie groups having it as Lie algebra. All of them are obtained by taking the quotient of the unique compact simply connected group $G$ w.r.t. a subgroup $\Gamma$ of the center $Z$ of $G$:
$G_\Gamma=G/\Gamma$. $\Gamma$ is isomorphic to $\pi_1(G)$.
\begin{teo}[Hopf]
 The cohomology of a connected compact Lie group G of rank $r$ over a field of characteristic 0 is that of a product of $r$ odd dimensional spheres.
\end{teo}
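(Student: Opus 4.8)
The plan is to exploit the fact that the cohomology of $G$ carries a natural Hopf algebra structure and then to invoke Hopf's structure theorem. First I would set $A = H^*(G;k)$ with $k$ a field of characteristic $0$. Since $G$ is a compact manifold, $A$ is finite dimensional, and since $G$ is connected, $A$ is connected in the graded sense, $A^0 = k$. The group multiplication $m \colon G \times G \to G$, the inclusion of the identity $e \colon \{*\} \to G$, and the inversion $\iota \colon G \to G$ induce, through the K\"unneth isomorphism $H^*(G \times G) \cong A \otimes A$ (valid over a field), a comultiplication $\Delta = m^*$, a counit, and an antipode that, together with the cup product, turn $A$ into a graded-commutative connected Hopf algebra of finite type.

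Next I would apply the structure theorem for such algebras: over a field of characteristic $0$, every connected graded-commutative Hopf algebra is free as a graded-commutative algebra, i.e. isomorphic to the tensor product of a polynomial algebra on even-degree generators and an exterior algebra on odd-degree generators. Finite dimensionality of $A$ forbids any even-degree generator, since a single polynomial generator already produces an infinite-dimensional subalgebra through its powers. Hence $A \cong \Lambda(x_1, \ldots, x_N)$ is an exterior algebra on generators of odd degree. Because $\Lambda(x)$ with $\deg x$ odd is precisely $H^*(S^{\deg x}; k)$ and K\"unneth gives $H^*\big(\prod_i S^{n_i}\big) \cong \bigotimes_i \Lambda(x_i)$, this already shows that $G$ has the cohomology of a product of $N$ odd-dimensional spheres.

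The remaining, and genuinely harder, step is to identify the number $N$ of exterior generators with the rank $r$. For this I would pass to the de Rham model over $\mathbb{R}$: by Cartan's averaging argument every cohomology class of a compact connected $G$ has a unique bi-invariant (equivalently, harmonic) representative, so $H^*(G; \mathbb{R}) \cong (\Lambda \mathfrak{g}^*)^G$, the algebra of $\mathrm{Ad}$-invariant exterior forms. I would then invoke the Hopf--Koszul--Samelson theorem together with Chevalley's transgression: the space of primitive invariant forms is in bijection with the $r$ fundamental polynomial invariants of the algebra, the one of degree $d_i$ transgressing to a primitive generator of odd degree $2d_i - 1$. This both counts the generators, $N = r$, and pins down the dimensions $2d_i - 1$ of the corresponding spheres. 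The main obstacle is precisely this last identification: the exterior-algebra structure is soft topology, but matching the generator count to the rank requires the invariant-theoretic input linking primitive bi-invariant forms on $\mathfrak{g}$ to the $r$ Weyl-invariant polynomial generators that also underlie Macdonald's formula.
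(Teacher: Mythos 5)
The paper does not prove this statement: it is quoted as a classical theorem and immediately dispatched with ``See \cite{HoMo}'', so there is no in-paper argument to compare yours against. Your outline is the standard proof of Hopf's theorem and it is sound: the Hopf-algebra structure on $H^*(G;k)$ via the K\"unneth isomorphism and $m^*$, Hopf's structure theorem in characteristic $0$ forcing a free graded-commutative algebra, and finite dimensionality killing the even (polynomial) generators, correctly yield an exterior algebra on odd-degree generators, i.e.\ the cohomology of a product of odd spheres. The genuinely nontrivial step is, as you say, the count $N=r$ and the degrees $2d_i-1$, for which you invoke Cartan's bi-invariant-forms model together with the Hopf--Koszul--Samelson theorem and Chevalley transgression; this is exactly the input the paper implicitly relies on when it asserts, right after the theorem, that the spheres have dimensions $D_i=2d_i-1$ with $d_i$ the degrees of the fundamental invariants. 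Two small remarks: your argument as written treats the semisimple situation, but it extends to an arbitrary connected compact $G$ since the central torus contributes degree-one invariants transgressing to circles, so the rank count still comes out right; and your proof is a reduction to two substantial cited structure theorems rather than a self-contained argument, which is entirely appropriate for a classical result of this kind and no weaker than the paper's bare citation.
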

See \cite{HoMo}.
Indeed, such spheres have dimension $D_i=2d_i-1$, $i=1,\ldots,r$, where $d_i$ are de degrees of the fundamental invariants. The Killing form induces on a simple Lie group a unique (up to normalisation) bi-invariant metric that gives to the compact
groups a Riemannian structure. In particular, the corresponding Riemannian volume form gives the Haar measure on the group. Normalising the metric by fixing the length of any given simple root completely fixes, by rigidity, the entire volume of the group,
which can then be computed by means of the Macdonald's formula \cite{Ma}, \cite{BeCaCe}, \cite{CaCeDVOrSc}:
\begin{align}
 V(G_\Gamma)=\frac 1{|\Gamma|}V(T^r)\prod_{i=1}^r V(S^{2d_i-1}) \prod_{i=1}^p (\check \alpha_i|\check \alpha_i), \label{Macdonalds}
\end{align}
where $|\Gamma|$ is the cardinality of $\Gamma$,
\begin{align}
 V(T^r)=|\check \alpha_1\wedge \ldots \wedge \check \alpha_r|
\end{align}
and
\begin{align}
 V(S^{2d_i-1})=2\frac {\pi^{d_i}}{(d_i-1)!}.
\end{align}

\section{Levy property from the Ricci Tensor}\label{app:Ricci Tensor}

We can change the property of being Levy or not, simply by rescaling the distances by $i$-dependent constants. In particular, if $X_i$, or better $(X_i,g_i)$, are compact Riemannian manifolds, if $\mu_{g_i}$ is the measure naturally
associated to $g_i$, we can then consider the family
\begin{align}
 Y_i=(X_i,g_i, \mu_i), \qquad\ \mu_i=\frac {\mu_{g_i}}{\mu_{g_i}(X_i)},
\end{align}
and ask whether it is Levy or not. A simple answer is given by Corollary of the Theorem 4.1 \cite{GrMi}: let $Ric_i$ the Ricci tensor determined by $g_i$ and define
\begin{align}
 R_i=\inf Ric_i(\tau,\tau)
\end{align}
taken in the set of all tangent vectors of unit length. The theorem states that if
\begin{align}
 \lim_{i\to\infty} R_i=+\infty,
\end{align}
then $Y_i$ is Levy.\\
We will now compute the Ricci tensor for the simple groups in order to prove that the classical sequences of simple Lie groups are Levy.
The Maurer-Cartan (Lie algebra valued) 1-form $\pmb j$ over a compact Lie group $G$ is related to the bi-invariant metric $\pmb g$ over $G$ by
\begin{align}
 \pmb g=-\kappa^2 K(\pmb j \otimes \pmb j),
\end{align}
where $\kappa$ is a real normalization constant (for example, chosen so that $G$ has volume $1$), and $K$ the Killing form over $Lie(G)$, which is negative definite since $G$ is compact. $\pmb j$ does satisfy the Maurer-Cartan equation
\begin{align}
 d\pmb j+\frac 12 [\pmb j, \pmb j]=0,
\end{align}
where $[,]$ is the Lie product combined with the wedge product, as usual. If we fix a basis $T_i$, $i=1,\ldots,d$, for $\mathfrak g=Lie(G)$, and define the structure constants by
\begin{align}
 [T_i,T_j]=\sum_{k=1}^d c_{ij}^{\ \ k} T_k,
\end{align}
we can set
\begin{align}
 \pmb j=\sum_{j=1}^d j^j T_j
\end{align}
and the Maurer-Cartan equation becomes
\begin{align}
 dj^k+\frac 12 \sum_{i,j} j^i\wedge j^j c_{ij}^{\ \ k}=0.
\end{align}
If we look at the components of $\pmb j$ as defining a vielbein $j^i$, $i=1,\ldots, d$, associated to a metric
\begin{align}
 \tilde g_{ij}=\delta_{ij} j^i\otimes j^j,
\end{align}
we see that the Maurer-Cartan equation can be seen as the structure equation for the Levi-Civita connection (in terms of the Ricci rotation coefficients):
\begin{align}
 dj^k+\sum_j\omega^k_{\ j} j^j=0,
\end{align}
which thus gives
\begin{align}
 \omega^k_{\ j}=\sum_i \frac 12 c_{ij}^{\ \ k} j^i.
\end{align}
The curvature two form is then
\begin{align}
 \Omega^k_{\ j}=d\omega^k_{\ j} +\sum_l \omega^k_{\ l}\wedge \omega^l_{\ j}.
\end{align}
Its components $R^k_{\ jlm}$ with respect to the vielbein are thus
\begin{align}
 R^k_{\ jlm}=\frac 14 \sum_s C_{lm}^{\ \ s} C_{js}^{\ \ k}
\end{align}
from which we see that the Ricci tensor is
\begin{align}\label{key}
 Ric_{ij}=-\frac 14 K_{ij},
\end{align}
where $K$ is the Killing form.
Let us fix the compact simple Lie group $G$ and fix any basis $\{T_i\}$ for the Lie algebra in the smallest faithful representation $\rho$. A standard choice is to assume the basis is orthonormalised w.r.t. the condition (standard normalisation,
see App. \ref{app:standard})
\begin{align}
- \frac 12 Tr(\rho(T_i)\circ \rho(T_j))=\delta_{ij} \label{standard}
\end{align}
which is natural since $G$ is compact. This is also a biinvariant metric, then, it exists a positive constant $\chi_{G}$ (independent from $\Gamma$) such that
\begin{align}
 K_{ij}=-\chi_G \delta_{ij}
\end{align}
so that
\begin{align}
 Ric_{ij}=\frac {\chi_G}4 \delta_{ij},
\end{align}
or in coordinates
\begin{align}
 Ric_{ij}=\frac {\chi_G}4 \tilde g_{ij}.\label{Ricci}
\end{align}
The coefficients $\chi_G$ for the classical series of simple groups are computed below. We have: $\chi_{SU(n)}=n+2$, $\chi_{SO(n)}=n-2$ and $\chi_{USp(2n)}=2n+2$. \\
Therefore, we get the following corollary of the Gromov-Milman theorem:
\begin{cor}\label{cor:1}
 Let
 \begin{align}
 Z_i=(G_i,\tilde g_i,\mu_i),
 \end{align}
 where $G_i$ is any one of the classical sequences of compact simple Lie group, considered in the previous section, $\tilde g_i$ the corresponding standardly normalised biinvarian metric, and $\mu_i$ the Riemannian normalised measure.
 Then $\{Z_i\}_i$ is a Levy family.
\end{cor}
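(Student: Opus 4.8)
The plan is to apply the Gromov–Milman criterion stated earlier directly: it suffices to show that the infimum of the Ricci curvature over unit tangent vectors diverges to $+\infty$ along each classical sequence. The paper has already done the conceptual heavy lifting by establishing, in equation \eqref{Ricci}, that for the standardly normalised metric the Ricci tensor is proportional to the metric itself, $Ric_{ij}=\frac{\chi_G}{4}\tilde g_{ij}$. This is the key structural fact: it says each $G_i$ is an Einstein manifold, so the Ricci curvature in any unit direction $\tau$ is the single number
\begin{align}
 Ric_i(\tau,\tau)=\frac{\chi_{G_i}}{4}\,\tilde g_i(\tau,\tau)=\frac{\chi_{G_i}}{4},
\end{align}
independent of $\tau$. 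Hence $R_i=\inf Ric_i(\tau,\tau)=\chi_{G_i}/4$, and the whole problem collapses to tracking the growth of the constant $\chi_{G_i}$.

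\emph{First} I would record the three values already computed in the excerpt, namely $\chi_{SU(n)}=n+2$, $\chi_{SO(n)}=n-2$, and $\chi_{USp(2n)}=2n+2$. \emph{Then} for whichever of the classical series $G_i$ is drawn from (as $i\to\infty$ the rank, and hence $n$, grows without bound), the corresponding $\chi_{G_i}$ is a linear, strictly increasing function of $n$, so $\lim_{i\to\infty}\chi_{G_i}=+\infty$. Consequently
\begin{align}
 \lim_{i\to\infty} R_i=\lim_{i\to\infty}\frac{\chi_{G_i}}{4}=+\infty,
\end{align}
which is precisely the hypothesis of the Gromov–Milman corollary of Theorem 4.1 in \cite{GrMi}. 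Applying that theorem to $Z_i=(G_i,\tilde g_i,\mu_i)$ with $\mu_i$ the normalised Riemannian measure yields that $\{Z_i\}_i$ is a Levy family, as claimed.

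\emph{The one point requiring care} — and the only place where real verification is needed — is the computation of the constants $\chi_G$ themselves, since everything downstream is trivial once they are known. Establishing $K_{ij}=-\chi_G\delta_{ij}$ requires comparing the Killing form to the standardly normalised trace form \eqref{standard} in the smallest faithful representation $\rho$; the proportionality is guaranteed by bi-invariance (Schur's lemma on the adjoint-invariant bilinear forms of a simple Lie algebra forces the Killing form and the trace form to be scalar multiples of one another), but extracting the exact scalar for each series demands the standard representation-theoretic identity relating the Killing form to $2h^\vee$ times the trace form in the defining representation, where $h^\vee$ is the dual Coxeter number. I would carry out this normalisation case by case — $A_{n-1}$, $B/D$, and $C$ — to confirm the stated values, but I do not expect any obstruction: these are classical and the linear-in-$n$ growth is robust. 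Given the constants, the Einstein property makes the Levy conclusion immediate.
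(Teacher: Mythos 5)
Your proof is correct and follows essentially the same route as the paper: read off $R_i=\chi_{G_i}/4$ from the Einstein property \eqref{Ricci}, substitute the computed values of $\chi_G$ for the three classical series, observe that $R_i\to+\infty$, and invoke the Gromov--Milman criterion. Your added caution about independently verifying the constants $\chi_G$ via the dual Coxeter number is reasonable but not a departure; the paper performs those computations explicitly case by case just after the corollary.
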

 \begin{proof}
 From (\ref{Ricci}) we get
\begin{align}
 R_i=\frac {\chi_G}{4}.
\end{align}
From the values of $\chi_G$ we get
\begin{align}
 R_i=
\begin{cases}
 \frac {i+2}4 & {  \rm if  } \quad\ G=SU(i), \\
 \frac {i-2}4 & {  \rm if  } \quad\  G=SO(i), \\
 \frac {i+1}2 & {  \rm if  } \quad\  G=USp(2i).
\end{cases}
\end{align}
Then, $\lim_{i\to\infty} R_i=+\infty$.
 \end{proof}

 \subsection{Computation of $\chi_G$}
 The strategy for computing the coefficient $\chi_G$ is very simple: after choosing an orthonormal basis $T_i$ in the smallest faithful representation $\rho$, we use it to compute one of these matrices in the adjoint representation. Then
 \begin{align}
 \chi_g=-\frac 12 \rm {Tr} (ad_{T_1}^2). \label {normalisation}
 \end{align}
 We will indicate with $E_{i,j}$ the elementary matrix having as the only non vanishing element the one at line $i$ and column $j$, which is 1. \\
 {\bf The unitary case:} The $\rho$ representation of $\mathfrak {su}(n)$ is realised by the anti-hermitian $n\times n$ matrices having vanishing trace. A basis is given by $H_k,S_{kj},A_{kj}$, $k=1,\ldots,n-1$, $1\leq k<j\leq n$, where
 \begin{align}
 H_k&=\frac {i\sqrt 2}{\sqrt{k^2+k}} (E_{1,1}+\ldots+E_{k,k}-kE_{k+1,k+1}), \qquad\ k=1,\ldots,n-1, \\
 S_{k,j}&=i (E_{i,j}+E_{j,i}), \qquad\ k<j, \\
 A_{k,j}&= E_{k,j}-E_{j,k}, \qquad\ k<j.
 \end{align}
 Let us construct the adjoint matrix of $H_1$. The only non vanishing commutators of $H_1$ are
 \begin{align}
 [H_1,A_{1,2}]&=2 S_{1,2}, \qquad\qquad\ [H_1,S_{1,2}]=-2 A_{1,2},\\
 [H_1,A_{1,j}]&= S_{1,j}, \qquad\qquad\ [H_1,S_{1,j}]=- A_{1,2}, \qquad\ j=3,\ldots,n.
 \end{align}
In order to compute $(ad(H_1))^2$ we have to compute again the commutator, which gives
 \begin{align}
 ad_{H_1}^2(A_{1,2})&=- 4 A_{1,2}, \qquad\qquad\ ad_{H_1}^2(S_{1,2})=- 4 S_{1,2},\\
 ad_{H_1}^2(A_{1,j})&=- A_{1,j}, \qquad\qquad\ ad_{H_1}^2(S_{1,j})=- S_{1,j}, \qquad\ j=3,\ldots,n.
 \end{align}
 Taking the trace we get $\chi_{SU(n)}=n+2$.

 \

 \noindent {\bf The orthogonal case:} The $\rho$ representation of $\mathfrak {so}(n)$ is realised by the anti-symmetric $n\times n$ matrices. A basis is given by $A_{kj}$, $1\leq k<j\leq n$, where
  \begin{align}
 A_{k,j}&=E_{i,j}-E_{j,i}, \qquad\ k<j.
 \end{align}
 Let us consider $ad(A_{1,2})$. The only non vanishing commutators are
 \begin{align}
 [A_{1,2},A_{1,j}]=- A_{2,j}, \qquad\qquad\ [A_{1,2},A_{2,j}]= A_{1,j}, \quad\ j=3,\ldots, n.
 \end{align}
 Iterating the commutators, we get
 \begin{align}
 ad_{A_{1,2}}(A_{1,j})=- A_{1,j}, \qquad\qquad\ ad_{A_{1,2}}(A_{2,j})=- A_{2,j}, \quad\ j=3,\ldots, n.
 \end{align}
After taking the trace we get $\chi_{SO(n)}=n-2$.

 \

 \noindent {\bf The symplectic case:} The $\rho$ representation of $\mathfrak {usp}(n)$ is realised by the anti-hermitian $2n\times 2n$ matrices having the form
 \begin{align}
 \begin{pmatrix}
 A & B \\ C & -A^t
 \end{pmatrix},
 \end{align}
 where $B$ and $C$ are symmetric. A basis is given by
 \begin{align}
 H_a &=i(E_{a,a}-E_{a+n,a+n}), \qquad\ a=1,\ldots,n;\\
 S^d_{ij}&=\frac i{\sqrt 2} (E_{i,j}+E_{j,i}-E_{i+n,j+n}-E_{j+n,i+n}), \qquad\ i<j;\\
 A^d_{ij}&=\frac 1{\sqrt 2} (E_{i,j}-E_{j,i}+E_{i+n,j+n}-E_{j+n,i+n}), \qquad\ i<j;\\
 T_a&=i (E_{a,a+n}+E_{a+n,a}), \qquad\ a=1,\ldots,n;\\
 S^a_{ij}&=\frac i{\sqrt 2} (E_{i,j+n}+E_{j,i+n}+E_{i+n,j}+E_{j+n,i}), \qquad\ i<j;\\
 U_a&= (E_{a,a+n}-E_{a+n,a}), \qquad\ a=1,\ldots,n;\\
 A^a_{ij}&=\frac 1{\sqrt 2} (E_{i,j+n}+E_{j,i+n}-E_{i+n,j}-E_{j+n,i}), \qquad\ i<j.
 \end{align}
We consider the adjoint representation of $H_1$. The non vanishing commutators are
\begin{align}
 [H_1,S^d_{1,j}]&=- A^d_{1j}, \qquad\qquad\ [H_1,A^d_{1,j}]= S^d_{1j}, \quad\ j=2,\ldots,n, \\
 [H_1,T_1]&=-{2} U_1, \qquad\qquad\ [H_1,U_1]={2} T_1, \\
 [H_1,S^a_{1,j}]&=-A^a_{1j}, \qquad\qquad\ [H_1,A^a_{1,j}]= S^a_{1j}, \quad\ j=2,\ldots,n.
\end{align}
Iterating the commutators we get
\begin{align}
 ad_{H_1}^2(S^d_{1,j})&=-S^d_{1j}, \qquad\qquad\ ad_{H_1}^2(A^d_{1,j})=- A^d_{1j}, \quad\ j=2,\ldots,n, \\
 ad_{H_1}^2(T_1)&=-{4} T_1, \qquad\qquad\ ad_{H_1}^2(U_1)=-4 U_1, \\
 ad_{H_1}^2(S^a_{1,j})&=- S^a_{1j}, \qquad\qquad\ ad_{H_1}^2(A^a_{1,j})=- A^a_{1j}, \quad\ j=2,\ldots,n.
\end{align}
Finally, by taking the trace we get $\chi_{USp(2n)}=2n+2$.

\subsection{On the standard normalisation}\label{app:standard}
The standard normalisation of the metric has a clear meaning if referred to  the two-plane rotations, which are the rotations leaving fixed a codimension $2$ space. These are contained in each group, and are, for example,
the one generated by each one of the generators $A_{k,j}$ of $SU(n)$, each one of the generators of $SO(n)$, or each one of the $U_a$ in the symplectic case. In order to understand its meaning let us fix for example $A_{k,j}$ and
consider the one parameter subgroup defined by
\begin{align}
R\equiv  R(\theta)\equiv R_{k,j}(\theta) ={\exp} (\theta A_{k,j}).
\end{align}
It represents rotations of the $k-j$ plane by $\theta$, and has periodicity $2\pi$. Let us consider the normalised metric restricted to that orbit $O\equiv O_{jk}=R([0,2\pi])$. A simple calculation gives
\begin{align}
 g|_{O}=-\frac 12 {\rm Tr} (R^{-1}dR \otimes R^{-1}dR)=d\theta^2.
\end{align}
Thus, the total length of the whole orbit, correspondent to a continuous rotation up to a round angle, is exactly $2\pi$.

\section{Concentration Locus on compact Lie groups}\label{ConcLocLieGroup}
We will start by considering the concentration of measure on compact Lie group families by direct inspection of their geometries and of the invariant measures on them. Let us consider the cases of the classical series. In this case we will prove not only that one gets Levy families, but we will also individuate a concentration locus.


\subsection{Concentration Locus on simple compact Lie Groups}
We consider the classical series of simple Lie groups. We will always mean the simply connected form of the groups and will consider the standard normalization for the matrices. By Corollary \ref{cor:1}, any sequence of them is a levy family. In this section we make a concrete calculation of a concentration locus for each of them.


\subsubsection{Special unitary groups}\label{sec:unitary}
The group $SU(n)$ of unitary $n\times n$ matrices with unitary determinant is a simply connected group of rank $n-1$ and its Lie algebra is the compact form of $A_{n-1}$, that is $\mathfrak {su}(n)$.
The center is $\mathbb Z_n$, generated by the $n$-th roots of 1.
The fundamental invariant degrees
are $d_i=i+1$, $i=1,\ldots,n-1$. The spheres generating the cohomology have dimension $D_i=2i+1$. With the standard normalisation a fundamental system of simple root can be represented as follows:\\
one identifies isometrically $H^*_{\mathbb R}$ with an hyperspace of $\mathbb R^{n}$, as
\begin{align}
 H^*_{\mathbb R} \simeq \{(x_1,\ldots,x_n)\in \mathbb R^n| x_1+\ldots+x_n=0\}.
\end{align}
In this representation, if $\pmb e_i$, $i=1,\ldots,n$ is the canonical (orthonormal) basis of $\mathbb R^n$, the simple roots are
\begin{align}
 \alpha_i=\pmb e_i-\pmb e_{i+1}, \qquad\ i=1,\ldots, n-1.
\end{align}
All roots have square length $2$, and coincide with the coroots. The dimension of the group is $n^2-1$, so that there are $p=n(n-1)/2$ positive coroots. The volume of the torus is
\begin{align}
 V(T^{n-1})=|(\pmb e_1-\pmb e_2)\wedge \ldots \wedge (\pmb e_{n-1} -\pmb e_n)|=\sqrt n.
\end{align}
Thus, the Macdonalds formula (\ref{Macdonalds}) gives
\begin{align}
 V(SU(n))=\frac {\sqrt n (2\pi)^{\frac {n(n+1)}2-1}}{\prod_{i=1}^{n-1}i!}.
\end{align}
It follows that
\begin{align}
 \frac {V(SU(n+1))}{V(SU(n))}=\sqrt{\frac {n+1}{n}} \frac {(2\pi)^{n+1}}{n!}\sim \sqrt {\frac {2\pi}n} \left(\frac {2\pi e}n\right)^n,
\end{align}
so that, since dim$SU(n+1)-$dim$SU(n)=2n+1$, we have
\begin{align}
\left( \frac {V(SU(n+1))}{V(SU(n))}\right)^{\frac 1{2n+1}}\sim \left(\frac {2\pi e}n\right)^{1/2}.\label{rapportosun}
\end{align}
This is substantially the same behaviour as for the spheres (of radius 1), \cite{Le}, and it is enough to prove the concentration of the measure. Indeed, it means that the volume of $SU(n)\subset SU(n+1)$ grows mush faster with $n$ than the volume of
$SU(n)$. This means that if we take the normal bundle of $SU(n)$ in $SU(n+1)$ and take a neighbourhood $\mathcal T_n$ of $SU(n)$ of radius $\varepsilon$ in the normal directions, we get for the volume of this neighbourhood
\begin{align}
 \frac {V(SU(n+1))}{\mathcal T_n} \sim \sqrt {\frac {2\pi}n} \left(\frac {2\pi e}{n\varepsilon^2}\right)^n \frac 1{\varepsilon}.
\end{align}
which for any given $\varepsilon$ decreases to $0$ when $n\to \infty$.
However, it does not give us direct information on how the concentration sets move.
A more precise result is the following.
\begin{prop}\label{ConLocus1}
Consider the family of simple Lie groups $SU(n+1)$ endowed with the usual biinvariant metric.
 Let us consider the Hopf structure of $SU(n+1)$, t.i. $U(n) \hookrightarrow SU(n+1) \longrightarrow \mathbb {CP}^n$. Let $S_n$ be the hyperplane at infinity in $\mathbb {CP}^n$, and
\begin{align}
 \iota : S_n \hookrightarrow \mathbb {CP}^n
\end{align}
the corresponding embedding. Finally, let $\mu_n$ the normalised invariant measure on $SU(n+1)$. Then, after looking at $SU(n+1)$ as a $U(n)$-fibration over $\mathbb {CP}^n$, the invariant measure concentrates on
the real codimension 2 subvariety
\begin{align}
\Sigma_n= \iota^* (SU(n+1)),
\end{align}
in the sense of definition \ref{def2}, with constant $\varepsilon$.
\end{prop}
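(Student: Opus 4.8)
The plan is to push everything down to the base $\mathbb{CP}^n$ of the Hopf fibration and to reduce the claim to a one–variable measure computation. First I would observe that, with the biinvariant metric upstairs and the induced quotient metric on $\mathbb{CP}^n$ (the Fubini--Study metric, up to the scale fixed by the standard normalisation), the projection $\pi:SU(n+1)\to\mathbb{CP}^n$ is a Riemannian submersion whose fibres are the cosets of $U(n)$; by homogeneity all fibres are isometric and hence have one and the same volume. Two consequences follow. On one hand, for the submanifold $S_n\subset\mathbb{CP}^n$ one has $d_{SU(n+1)}(g,\pi^{-1}(S_n))=d_{\mathbb{CP}^n}(\pi(g),S_n)$: the inequality $\le$ comes from horizontally lifting a minimising geodesic of the base issuing from $\pi(g)$, whose endpoint lies in $\pi^{-1}(S_n)$ with the same length, and $\ge$ from projecting any curve upstairs, which cannot increase length. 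Hence the tubular neighbourhood satisfies $U^{\varepsilon}_n=\pi^{-1}(N^{\varepsilon}(S_n))$. On the other hand, constancy of the fibre volume (coarea/Fubini) shows that the pushforward $\pi_*\mu_n$ is exactly the normalised Fubini--Study volume $\nu_n$ on $\mathbb{CP}^n$. Combining the two, $\mu_n(SU(n+1)-U^{\varepsilon}_n)=\nu_n(\mathbb{CP}^n-N^{\varepsilon}(S_n))$, so it suffices to prove that $\nu_n$ concentrates on the hyperplane at infinity.

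Next I would make the distance to $S_n$ explicit. Writing $S_n=\{[z_0:\cdots:z_n]:z_0=0\}$ and representing a point by a unit vector $z\in\mathbb{C}^{n+1}$, the nearest point of $S_n$ is obtained by deleting the $z_0$-component, and a direct evaluation of the Fubini--Study distance (in the normalisation giving $\mathbb{CP}^n$ diameter $\pi/2$) gives $d_{\mathbb{CP}^n}([z],S_n)=\arcsin|z_0|$. Therefore $\mathbb{CP}^n-N^{\varepsilon}(S_n)=\{[z]:|z_0|\ge\sin\varepsilon\}$, a geodesic ball about the pole $[1:0:\cdots:0]$. At this point I would also check that the standard normalisation fixes the scale of the Fubini--Study metric independently of $n$ (all roots having the same length $\sqrt2$), so that a fixed $\varepsilon$ defines a genuine, $n$-independent tube; this is what makes the formulation ``with constant $\varepsilon$'' meaningful, and a change of the overall scale only rescales $\varepsilon$ by a fixed constant.

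The final step is the measure estimate. Realising $\mathbb{CP}^n$ as $S^{2n+1}/U(1)$, the measure $\nu_n$ is the image of the uniform measure on the unit sphere of $\mathbb{C}^{n+1}$, under which $(|z_0|^2,\ldots,|z_n|^2)$ is uniformly distributed on the standard simplex; in particular $|z_0|^2$ follows a $\mathrm{Beta}(1,n)$ law, so that $\nu_n(\{|z_0|^2\ge s\})=(1-s)^n$. Taking $s=\sin^2\varepsilon$ yields
\[
\mu_n\bigl(SU(n+1)-U^{\varepsilon}_n\bigr)=\nu_n\bigl(\{|z_0|\ge\sin\varepsilon\}\bigr)=(\cos^2\varepsilon)^n,
\]
which tends to $0$ as $n\to\infty$ for every fixed $\varepsilon>0$, in fact exponentially fast. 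Since $\Sigma_n=\pi^{-1}(S_n)$ is the preimage of the real codimension $2$ subvariety $S_n=\mathbb{CP}^{n-1}$, it has itself real codimension $2$, and this is precisely the concentration asserted in Definition \ref{def2}.

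The step I expect to be the main obstacle is the first one: making rigorous that the submersion transports both the distance function and the measure faithfully, i.e. the identities $U^{\varepsilon}_n=\pi^{-1}(N^{\varepsilon}(S_n))$ and $\pi_*\mu_n=\nu_n$. The distance identity rests on the existence and length-preservation of horizontal lifts for a Riemannian submersion (so that $d$ to $\Sigma_n$ depends only on $\pi(g)$), while the measure identity rests on the fibre volume being constant, which is exactly where homogeneity of the Hopf fibration is essential. Once these two reductions are secured, everything collapses to the elementary $\mathrm{Beta}$-distribution estimate above, and the $n$-independence of the Fubini--Study scale under the standard normalisation guarantees that the constant-$\varepsilon$ statement is the natural one.
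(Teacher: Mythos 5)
Your proposal is correct and follows the same overall strategy as the paper --- view $SU(n+1)$ as a $U(n)$-fibration over $\mathbb{CP}^n$, push the problem down to the base, and show that the Fubini--Study volume concentrates on the hyperplane at infinity --- but the technical route to the key estimate is genuinely different. The paper relies on the explicit Euler-angle parametrization of $SU(n+1)$ from \cite{BeCaCe}: it builds a vielbein from the Maurer--Cartan form restricted to the coset representative $h$, reads off the induced measure $\det\underline e = 2\,d\theta_n\, d\phi_n \cos\phi_n \sin^{2n-1}\phi_n \prod_a[\cdots]$, identifies the coordinate $\xi=\phi_n$ with the radial Fubini--Study coordinate, and obtains $\int_0^{\pi/2-\varepsilon}\cos\phi_n\sin^{2n-1}\phi_n\,d\phi_n = \cos^{2n}\varepsilon/(2n)$ directly. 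You instead invoke the Riemannian-submersion structure abstractly (horizontal lifts give $U^\varepsilon_n=\pi^{-1}(N^\varepsilon(S_n))$, constant fibre volume gives $\pi_*\mu_n=\nu_n$), compute the distance to $S_n$ as $\arcsin|z_0|$, and use the classical fact that $|z_0|^2$ is $\mathrm{Beta}(1,n)$-distributed on $S^{2n+1}$, arriving at the identical bound $\cos^{2n}\varepsilon$. (The two are consistent: in the paper's coordinates $|z_0|=\cos\xi$, so the distance to $S_n$ is $\pi/2-\xi$.) What your route buys is a cleaner and more self-contained reduction --- in particular you actually justify the two identities ($U^\varepsilon_n=\pi^{-1}(N^\varepsilon(S_n))$ and $\pi_*\mu_n=\nu_n$) that the paper introduces with ``one expects the measure to factorise'' and then supports by citation to the explicit construction in \cite{CaDaPiSc} and \cite{BeCaCe}; what the paper's route buys is the explicit coordinate form of the metric and measure, which it reuses in the subsequent remarks (the divergent-codimension refinement and the $\varepsilon_n$-rate discussion). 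Your observation that the standard normalisation fixes the Fubini--Study scale independently of $n$, making the constant-$\varepsilon$ statement meaningful, is a point the paper only addresses implicitly.
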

\begin{proof}
Recall that $U(n)\subset SU(n+1)$ is a maximal proper Lie subgroup and $\mathbb {CP}^n=SU(n+1)/U(n)$ (and $SU(n)\subset U(n)$). Therefore, one expects for the measure $\mu_{SU(n+1)}$ to factorise as
\begin{align}
 d\mu_{n}=d\mu_{\mathbb {CP}^n} \times d\mu_{U(n)}.
\end{align}

Now $\mathbb {CP}^n\simeq S^{2n+1}/U(1)$ and the natural metric over it is the Fubini-Study metric that is invariant under the action of the whole $SU(n+1)$ group. Thus, we expect the measure $d\mu_{\mathbb {CP}^n}$, inherited from the
whole invariant measure, to be the Riemannian volume form corresponding to the Fubini-Study metric. On the other hand, the relation between $\mathbb {CP}^n$ and $S^{2n+1}$ suggests that the concentration of the measure of $d\mu_{\mathbb {CP}^n}$ should happen
over some codimension two submanifold $S\subset \mathbb {CP}^n$. This would imply that the whole invariant measure of $SU(n+1)$ concentrates on a $U(n)$ fibration over $S$. This is the strategy of the proof that we will now explicit out.
To this aim, we employ the explicit construction of the invariant measure over Lie groups given in \cite{CaDaPiSc}. In particular, the analysis of the geometry underlying the construction of the invariant measure
for $SU(n)$ has been performed in \cite{BeCaCe}.
Fix a generalized Gell-Mann basis $\{\lambda_I\}_{I=1}^{n^2+2n}$ for the Lie algebra of $SU(n+1)$ as in \cite{BeCaCe}. Thus, the first $n^2$ matrices generate the maximal subgroup $U(n)$, the last one being the $U(1)$ factor, and,
in particular, the matrices $\{\lambda_{(a+1)^2-1}\}_{a=1}^{n}$ generate the Cartan torus $T^n$. Then, the parametrization of $SU(n+1)$ can be obtained inductively as
\begin{align}
 SU(n+1)\ni g=h\cdot u,
\end{align}
where $u\in U(n)$ is a parametrization of the maximal subgroup, and
\begin{align}
 h=e^{i\theta_1\lambda_3} e^{i\phi_1 \lambda_2} \prod_{a=2}^n [e^{i(\theta_a/\epsilon_a)\lambda_{a^2-1}}e^{i\phi_a\lambda_{a^2+1}}], \qquad\ \epsilon_a=\sqrt {\frac 2{a(a-1)}},
\end{align}
parametrizes the quotient. From $h$ one can construct a vielbein for the quotient as follows. Let $J_h$ be the Maurer-Cartan 1-form of $SU(n+1)$ restricted to $h$. Then set
\begin{align}
 e^l=\frac 12 {\rm Tr} [j_h\cdot \lambda_{n^2+l-1}], \quad\ l=1,\ldots,2n.
\end{align}
They form a vielbein for $SU(n+1)/U(n)\simeq \mathbb {CP}^n$ so that
\begin{align}
 & ds^2_{\mathbb {CP}^n} =\delta_{lm} e^l\otimes e^m,\label{metric}\\
 & d\mu_{\mathbb {CP}^n}=\det \underline e
\end{align}
are the metric and invariant measure respectively, induced on $\mathbb {CP}^n$. In particular, one gets
\begin{align}
 \det \underline e =2d\theta_n d\phi_n \cos \phi_n \sin^{2n-1} \phi_n \prod_{a=1}^{n-1} [\sin \phi_a \cos^{2a-1} \phi_a d\theta_a d\phi_a]. \label{measure}
\end{align}
One can also write down the metric. Indeed, it has been shown in \cite{BeCaCe} that it is exactly the Fubini-Study metric for $\mathbb {CP}^n$ written in unusual coordinates. Since this is relevant for our analysis, let us summarise it. Let
$(\zeta_0:\cdots:\zeta_n)$ be the homogeneous coordinates and
\begin{align}
 \mathcal K=\frac 12 \log (|\zeta_0|^2+\ldots+|\zeta_n|^2)
\end{align}
be the K\"ahler potential. Fix a coordinate patch, say $U_0=\{ \underline \zeta: \zeta_0\neq 0 \}$ with the relative non-homogeneous coordinates $z_i=\zeta_i/\zeta_0$, $i=1,\ldots,n$. When $\underline z$ varies in $\mathbb C^n$, the coordinate
patch covers the whole $\mathbb {CP}^n$ with the exception of a real codimension two submanifolds defined by the hyperplane
\begin{align}
S_n\equiv \mathbb {CP}^{n-1} =\{ 0:\zeta_1 :\cdots: \zeta_n\},
\end{align}
the so called hyperplane at infinity.
In these local coordinates the Fubini-Study metric has components $g_{i\bar j}=\partial^2 \mathcal K/\partial z_i \partial \bar z_{j}$:
\begin{align}
 ds^2_{F-S}=\frac {\sum_i dz_i d\bar z_i}{1+\sum_j |z_j|^2}- \frac {\sum_{i,j} \bar z_i dz_i z_j d\bar z_j}{(1+\sum_j |z_j|^2)^2}.
\end{align}
Following \cite{BeCaCe}, let us introduce the change of coordinates
\begin{align}
 z_i=\tan \xi R_i(\underline \omega) e^{i\psi_i}
\end{align}
where $R_j(\underline \omega)$ is an arbitrary coordinatization of the unit sphere $S^{n-1}$, $\psi_i\in [0,2\pi)$, $\xi\in[0,\pi/2)$.
In these coordinates
\begin{align}
 ds^2_{F-S}=d\xi^2 +\sin^2 \xi \left[ \sum_i dR_idR_i +\sum_i  R_i^2 d\psi_i d\psi_i \right]-\sin^4 \xi \left[ \sum_i R_i^2 d\psi_i \right]^2.\label{metrica-FS-mod}
\end{align}
In \cite{BeCaCe} it has been proved that this metric coincides with \eqref{metric}, after a simple change of variables, which, in particular, includes $\xi=\phi_n$. On the other hand, from \eqref{measure}, using
\begin{align}
 \int_0^{\pi/2-\varepsilon} \cos \phi_n \sin^{2n-1} \phi_n d\phi_n= \frac {\cos^{2n}\varepsilon}{2n}, \label{zero}
\end{align}
we see that the measure over $\mathbb {CP}^{n}$ concentrates around $\phi_n=\xi=\pi/2$. Finally, since
\begin{align}
 (1:\tan \xi R_1(\underline \omega ) e^{i\psi_1}:\cdots :\tan \xi R_n(\underline \omega ) e^{i\psi_n})&=(1/\tan \xi : R_1(\underline \omega ) e^{i\psi_1}:\cdots :R_n(\underline \omega ) e^{i\psi_n}) \cr & \mapsto
 (0:R_1(\underline \omega ) e^{i\psi_1}:\cdots :R_n(\underline \omega ) e^{i\psi_n})
\end{align}
when $\xi\to\pi/2$, we see that the concentration is on the hyperplane $S_n$ at infinity. Thus , if
\begin{align}
 \iota : S_n \hookrightarrow \mathbb {CP}^n
\end{align}
is the embedding of the hyperplane and if we look at $SU(n+1)$ as a fibration over $\mathbb {CP}^n$, we get that the whole measure concentrates on
\begin{align}
\Sigma= \iota^* (SU(n+1)),
\end{align}
which is what we had to prove.
\end{proof}
\begin{rem}\rm
It is worth to remark that we are not saying the the sequence of manifolds we have selected completely describe the concentration. Indeed, it is obvious that the concentration can take place on proper subspaces of the sequence. For example,
(\ref{zero}) shows that the volume of the region $B_\varepsilon$ given by $|\xi|> \varepsilon$ from the concentration locus has volume vanishing as
\begin{align}
 \sim e^{-n\varepsilon^2}
\end{align}
If we now take $\varepsilon \to \varepsilon/\sqrt N$ and consider $N$ regions $B^k_{\varepsilon/N}$, $k=1,\ldots,N$ associated to $N$ $\mathbb {CP}_n$ planes intersecting transversally in a point $p$ of our concentration locus. Then,
$\bigcup_{k=1}^N B^k_{\varepsilon/N}$ has volume of order $V_n \sim Ne^{-n\varepsilon^2/N}$. Its complement is a subset of codimension $N$ of the concentration locus. In order to have $V_n\to0$ it is sufficient that, for example,
\begin{align}
 Ne^{-n\varepsilon^2/N}\sim \frac Nn
\end{align}
to that we can consider $N\equiv N_n$ as dependent on $n$, with the condition that
\begin{align}
 N_n \frac {\log n}n \to 0
\end{align}
when $n\to\infty$ with $\varepsilon$ fixed. Therefore, the codimension in general can diverge, and we get concentration loci of divergent codimension. This shows that it is not clear at all if a notion of optimal concentration can be defined.
\end{rem}
\begin{rem} \rm
In order to get uniform concentration in the sense of Gromov and Milman, we have to add a further hypothesis to our proposition, already suggested by formula (\ref{rapportosun}). From Corollary \ref{cor:1}, we see that if one normalizes the size of
$SU(n+1)$ so that its coroots have length 2 (the standard choice), then its scalar curvature is $R_n=\frac {n+3}4$. However, we can, in general, relax this condition and leave the length $|\check \alpha|$ of the coroots free. In this case, the scalar curvature
becomes
\begin{align}
 R_n= \frac {n+2}{|\check \alpha_n|^2}.
\end{align}
Following \cite{GrMi}, we see that we have a Levy family if $|\check \alpha_n|$ grows less than $\sqrt n$. Finally, it is interesting to notice that if we approximate the shape of the group as the product of $n$ spheres of radius $|\check \alpha_n|$, then its
diameter scales as $|\check \alpha_n| \sqrt n$. Thus the uniform concentration is guaranteed if the diameter of the group grows less than $\sim n$. Finally, since the dimension of $SU(n+1)$ is $d_n=n^2+2n$, we see that the condition is such that the diameter
must grow less than $\sqrt {d_n}$, which is very similar to the case of the spheres.\\
Finally, this can also be understood from (\ref{zero}) also. Indeed, keeping the diameter fixed, we see that the $\varepsilon$ dependence is dominated by the therm $\cos^{2n}\varepsilon=(1-\sin^2\varepsilon)^n$. In place of rescaling the diameter, assume
we rescale $\varepsilon$ in a $n$-dependent way, so $\varepsilon\to \varepsilon_n$, and assume that $\varepsilon_n\to 0$ when $n\to\infty$. Therefore, for large $n$ we have
\begin{align}
 \cos^{2n}\varepsilon\sim e^{-n\varepsilon_n^2},
\end{align}
which converges to zero only if $n\varepsilon_n^2\to\infty$. This means that $\varepsilon_n$ must decrease to $0$ slower than $n^{-\frac 12}$, which is as to say that $\varepsilon/|\check \alpha|$ must go to zero slower than $n^{-\frac 12}$ independently
from how we allow $\varepsilon$ and $|\check \alpha |$ to vary separately with $n$.	
\end{rem}

\begin{rem} \rm
The concentration metric in the form (\ref{metrica-FS-mod}) becomes degenerate at the concentration locus when $\xi=\frac \pi2$, since one has to further fix one of the phases $\psi_j$. Nevertheless, if we consider
a region $V_r$ of $\xi$-radius $\pi/2-\xi=r$ around that locus, since the total measure is normalized to 1, we see from (\ref{zero}) that its volume is
\begin{align}
 \mu(V_r)=1-\cos^{2n} r.
\end{align}
As in \cite{ledoux}, chapter 2.1, we can use the inequality $\cos r\leq e^{-\frac {r^2}2}$ for $0\leq r\leq \frac \pi2$, so that
\begin{align}
 \mu(V_r)\geq 1-e^{-nr^2},
\end{align}
which gives us an estimation of how much the measure concentrates around the singular locus: for any fixed $r>0$, the measure of $V_r$ converges exponentially to the full measure when $n$ increases.\\
\end{rem}

It is worth mentioning that the limit topology depends not only on the topology of each space of the chain but also from the embeddings defining the sequence of groups.
For example, we can replace the canonical embedding $U(n)\subset U(n+1)$ with the embeddings
\begin{align}
 U(n) \stackrel J{\hookrightarrow} SU(n+1) \subset U(n+1)
\end{align}
with
\begin{align*}
 J(X)=
\begin{pmatrix}
 X & \vec 0 \\ \vec 0^t & \det X^{-1}
\end{pmatrix}.
\end{align*}
These embeddings lead to the result $SU(\infty)_J=U(\infty)_J$ for any limit topology we choose. Observe that if you use canonical embeddings it is unknown whether the inductive limit $SU(\infty)$ is extremely amenable or not \cite{Pe2}.

\subsubsection{Odd special orthogonal groups}
The second classical series of simple groups is given by the odd dimensional special orthogonal groups $SO(2n+1)$ of dimension $n(2n+1)$ and rank $n$. The center of the universal covering $Spin(2n+1)$ is $\mathbb Z_2$. The Lie algebra is the
compact form of $B_n$, $n\geq 2$.
The invariant degrees are $d_i=2i$, $i=1,\ldots,n$ and the dimensions of the spheres generating the cohomology are $D_i=4i-1$. If we choose the standard normalisation, a fundamental system of simple roots in
$\mathbb R^n\simeq H^*_{\mathbb R}$ is given by $\alpha_i=\pmb e_i-\pmb e_{i+1}$, $i=1,\ldots,n-1$, and $\alpha_n=\pmb e_n$. The corresponding coroots are $\check \alpha_i=\alpha_i$ for $i=1,\ldots,n-1$, and $\check \alpha_n=2\alpha_n$.
There are $p=n^2$ positive coroots, $n$ of which have length 2 and the others have square length 2. The volume of the torus is
\begin{align}
 V(T^n)=|(\pmb e_1-\pmb e_2)\wedge (\pmb e_{n-1}-\pmb e_n)\wedge 2\pmb e_n|=2.
\end{align}
The Macdonald's formula thus gives
\begin{align}
 V(Spin(2n+1))=\frac {2^{n(n+2)+1}\pi^{n(n+1)}}{\prod_{i=1}^n (2i-1)!},
\end{align}
so that
\begin{align}
 \frac {V(Spin(2n+1))}{V(Spin(2n-1))}=\frac {2^{2n+1}\pi^{2n}}{(2n-1)!}\sim \sqrt {\frac {4\pi}{n-\frac 12}} \left(\frac {2\pi e}{2n-1}\right)^{2n-1}.
\end{align}
Since dim$Spin(2n+1)-$dim$Spin(2n-1)=4n-1$, we have
\begin{align}
\left( \frac {V(Spin(2n+1))}{V(Spin(2n-1))}\right)^{\frac 1{4n-1}}\sim \left(\frac {2\pi e}{2n}\right)^{1/2},
\end{align}
which shows the same behaviour as for the unitary groups. Again, in order to understand how concentration works, we have to do some geometry.
\begin{prop}\label{ConcLocus2}
Consider the sequence of simple groups $Spin(2n+1)$ endowed with the biinvariant metric.
 Set $B_n=S^{2n}\times S^{2n-1}\equiv Spin(2n+1)/Spin(2n-1)$ so that $Spin(2n+1)$ looks as a $Spin(2n-1)$-fibration over $B_n$. Finally, let $S_n$ a bi-equator of $B_n$ (the cartesian product of the equators of the two spheres), and
 \begin{align}
 \iota : S_n \hookrightarrow B_n
\end{align}
the corresponding embedding. Then, in the limit $n\to\infty$ the invariant measure $\mu_n$ of $Spin(2n+1)$ concentrates on the codimension two subvariety
\begin{align}
\Sigma_n= \iota^* (Spin(2n+1)),
\end{align}
in the sense of definition \ref{def2}.
\end{prop}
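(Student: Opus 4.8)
\section*{Proof proposal}

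The plan is to follow the structure of the proof of Proposition \ref{ConLocus1} step for step, replacing the Hopf fibration $U(n)\hookrightarrow SU(n+1)\to\mathbb{CP}^n$ by the fibration $Spin(2n-1)\hookrightarrow Spin(2n+1)\to B_n$, and replacing the single concentrating angle $\xi=\phi_n$ by one polar (latitude) angle for \emph{each} of the two sphere factors of $B_n$. First I would set up a generalized Euler-angle parametrization of the type built in \cite{CaDaPiSc}, \cite{BeCaCe}, writing $g=h\cdot k$ with $k\in Spin(2n-1)$ parametrizing the fiber and $h$ parametrizing the coset $B_n$. From the Maurer--Cartan form $j_h$ restricted to $h$ one constructs a vielbein on $B_n$ exactly as in the passage leading to \eqref{measure}, and the bi-invariant Haar measure then factorizes as
\begin{align}
 d\mu_n=d\mu_{B_n}\times d\mu_{Spin(2n-1)},
\end{align}
with $d\mu_{B_n}=\det\underline e$ the invariant measure induced on the base.

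Second, I would identify $d\mu_{B_n}$ with the product of the two round sphere measures. In suitable angular coordinates the $S^{2n}$ factor carries a distinguished latitude $\theta\in[0,\pi]$ with weight $\sin^{2n-1}\theta\,d\theta$ (times the lower-dimensional round measure), while the $S^{2n-1}$ factor carries a latitude $\phi\in[0,\pi]$ with weight $\sin^{2n-2}\phi\,d\phi$; the equators are $\theta=\pi/2$ and $\phi=\pi/2$, and their product is the bi-equator $S_n$. This is the orthogonal analogue of \eqref{measure}. Reading off from the explicit Maurer--Cartan form of \cite{BeCaCe} which two angles play the role of the latitudes, and checking that the normal-metric distance to $S_n$ is comparable to $(\tfrac\pi2-\theta,\tfrac\pi2-\phi)$, is the step that requires the most care: unlike the single angle $\xi=\phi_n$ of the unitary case, here two independent latitudes must be controlled simultaneously, and $B_n$ is a priori only a (Stiefel-type) $S^{2n-1}$-bundle over $S^{2n}$ rather than a genuine Riemannian product, so the ``product'' must be read in the fibered/measure sense. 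This is the main obstacle.

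Third, I would invoke the classical L\'evy concentration on spheres: for the normalized round measure on $S^m$ and a fixed equator, using $\sin\theta=\cos(\tfrac\pi2-\theta)\le e^{-\frac12(\frac\pi2-\theta)^2}$ as in \cite{Le}, \cite{ledoux},
\begin{align}
 \frac{\int_{|\frac\pi2-\theta|>\varepsilon}\sin^{m-1}\theta\,d\theta}{\int_0^\pi\sin^{m-1}\theta\,d\theta}\le C\,e^{-\frac12(m-1)\varepsilon^2}.
\end{align}
Applying this to $S^{2n}$ (with $m=2n$) and to $S^{2n-1}$ (with $m=2n-1$), and bounding the complement of the $\varepsilon$-tube around $S_n$ by the union of the two individual latitude bands, I obtain
\begin{align}
 \mu_{B_n}(B_n-U^{\varepsilon}_n)\le C'\,e^{-c\,n\varepsilon^2}\longrightarrow 0
\end{align}
as $n\to\infty$, for any fixed $\varepsilon>0$.

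Finally, since the Haar measure factorizes as above and the fiber $Spin(2n-1)$ is integrated out in full, the measure of the complement of the tubular neighbourhood of $\Sigma_n=\iota^*(Spin(2n+1))$ in $Spin(2n+1)$ equals the base quantity above and hence vanishes as $n\to\infty$. As $S_n$ has codimension two in $B_n$ (one from each equator), $\Sigma_n$ has codimension two in $Spin(2n+1)$, so the measure concentrates on a genuine codimension-two concentration locus in the sense of Definition \ref{def2}, which is what we had to prove.
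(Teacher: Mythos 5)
Your proposal is correct and follows essentially the same route as the paper: the paper's own (sketched) proof likewise factorizes the Haar measure as $d\mu_{Spin(2n-1)}\times dm_{S^{2n}}\times dm_{S^{2n-1}}$ via the Euler-type construction of \cite{CaDaPiSc} and then invokes the classical concentration of the sphere measures on their equators. Your additional care about $B_n$ being only a fibered (Stiefel-type) product in the measure sense, and the explicit L\'evy estimate $e^{-cn\varepsilon^2}$, are details the paper leaves to the reader but do not change the argument.
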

\begin{proof}
Since the proof is much simpler than in the previous case, we just sketch it, leaving the details to the reader.
By using the methods in \cite{CaDaPiSc}, in a similar way as before, it is easy to prove that the invariant measure $d\mu_n$ factorises as
\begin{align}
 d\mu_{Spin(2n+1)}=d\mu_{Spin(2n-1)}\times dm_{S^{2n}} \times dm_{S^{2n-1}},
\end{align}
where $dm$ is the Lebesgue measure. Therefore, since is well known that the Lebesgue measures on the spheres concentrate over the equators
we get again that the measure $d\mu_n$ concentrates on a $Spin(2n-1)$ fibration over a codimension two submanifold of $S^{2n}\times S^{2n-1}$.
\end{proof}

\vspace{0.5cm}



\subsubsection{Symplectic groups}
The compact form $USp(2n)$ of the symplectic group of rank $n$ has dimension $2n^2+2$. Its center is $\mathbb Z_2$ and its Lie algebra is the compact form of $C_n$, $n\geq 2$. The invariant degrees are the same as for $SO(2n+1)$, so
they have the same sphere decomposition. In the standard normalisation the roots of $USp(2n)$ are the coroots of $SO(2n+1)$ and viceversa. Therefore, we have $n^2-n$ coroots of length $\sqrt 2$ and $n$ of length 1. The volume
of the torus is
\begin{align}
 V(T^n)=|(\pmb e_1-\pmb e_2)\wedge (\pmb e_{n-1}-\pmb e_n)\wedge \pmb e_n|=1,
\end{align}
and the volume of the group is
\begin{align}
 V(Usp(2n))=\frac {2^{n^2}\pi^{n(n+1)}}{\prod_{i=1}^n (2i-1)!}.
\end{align}
Again, we get
\begin{align}
\left( \frac {V(USp(2n))}{V(USp(2n-2))}\right)^{\frac 1{4n-1}}\sim \left(\frac {2\pi e}{2n}\right)^{1/2}.
\end{align}
\begin{prop}
Consider the sequence of symplectic groups $USp(2n)$ endowed with the biinvariant metric.
 Set $S^{4n-1}\equiv USp(2n)/USp(2n-2)$ so that $USp(2n)$ looks as an $USp(2n-2)$-fibration over $B_n=S^{4n-1}$. Finally, let $S_n$ an equator of $B_n$, and
 \begin{align}
 \iota : S_n \hookrightarrow B_n
\end{align}
the corresponding embedding. Then, in the limit $n\to\infty$ the invariant measure $\mu_n$ of $Spin(2n+1)$ concentrates on the codimension one subvariety
\begin{align}
\Sigma_n= \iota^* (USp(2n)),
\end{align}
in the sense of definition \ref{def2}.
\end{prop}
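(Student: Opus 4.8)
The plan is to mirror the proof of Proposition \ref{ConcLocus2}; the argument is in fact simpler, because here the base of the fibration is a single sphere rather than a product of two. First I would recall the geometric origin of the fibration: the compact symplectic group $USp(2n)=Sp(n)$ acts transitively by isometries on the unit sphere $S^{4n-1}$ of $\mathbb H^n$, and the stabilizer of a unit quaternionic vector is precisely $USp(2n-2)$. This realizes $B_n=S^{4n-1}\cong USp(2n)/USp(2n-2)$ and exhibits $USp(2n)$ as a $USp(2n-2)$-fibration over $B_n$, as in the statement. A dimension count confirms consistency, since $\dim USp(2n)-\dim USp(2n-2)=4n-1=\dim S^{4n-1}$.

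Second, following the explicit construction of the invariant measure in \cite{CaDaPiSc} already used in the earlier propositions, I would factorize the bi-invariant Haar measure. By the integration formula for the compact fibration $USp(2n-2)\hookrightarrow USp(2n)\to S^{4n-1}$,
\begin{align}
 d\mu_{USp(2n)}=d\mu_{USp(2n-2)}\times dm_{S^{4n-1}},
\end{align}
where $dm_{S^{4n-1}}$ is the measure induced on the base. The one point needing care is the identification of this induced measure. The metric descending from the bi-invariant one is in general a squashed (Berger) sphere metric, since the isotropy representation of $USp(2n-2)$ on the tangent space of the base splits into the standard module and a trivial three-dimensional summand, on which the Killing form may rescale differently. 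This squashing is harmless: $dm_{S^{4n-1}}$ is the unique $USp(2n)$-invariant probability measure on the homogeneous space $S^{4n-1}$, and since the round measure is itself $USp(2n)$-invariant (because $USp(2n)\subset SO(4n)$ acts by rotations of $\mathbb R^{4n}$), the two coincide. Hence the induced base measure is exactly the normalized round measure, independently of the metric being squashed.

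Third, I would invoke the classical spherical concentration phenomenon: the uniform measure on $S^N$ concentrates on any equatorial great subsphere $S^{N-1}$ as $N\to\infty$ (see e.g. \cite{ledoux}, chapter 2.1). With $N=4n-1$ this shows that $dm_{S^{4n-1}}$ concentrates on the equator $S_n\cong S^{4n-2}$. Combining this with the factorization, the full measure $\mu_n$ concentrates on the $USp(2n-2)$-fibration over $S_n$, that is, on $\Sigma_n=\iota^*(USp(2n))$; since $S_n$ has codimension one in $S^{4n-1}$ and the fiber direction is untouched, $\Sigma_n$ has codimension one in $USp(2n)$, as claimed, and the concentration holds in the sense of Definition \ref{def2}. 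The main obstacle is precisely the measure-identification step of the second paragraph; once the descended measure is recognized as the round one via uniqueness of the invariant measure on $USp(2n)/USp(2n-2)$, the remainder is a direct transcription of the previous proof.
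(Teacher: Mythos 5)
Your proposal is correct and follows essentially the same route as the paper, which simply declares the proof ``the same as for the spin groups'': factorize the invariant measure as $d\mu_{USp(2n)}=d\mu_{USp(2n-2)}\times dm_{S^{4n-1}}$ via the methods of \cite{CaDaPiSc} and invoke the classical concentration of the sphere measure on an equator. Your second paragraph, identifying the induced base measure with the round one via uniqueness of the $USp(2n)$-invariant probability measure despite the possible Berger squashing, is a point the paper leaves implicit, and it is a welcome clarification rather than a deviation.
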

\noindent The proof is the same as for the spin groups.



\subsubsection{Even special orthogonal groups}
The last series is given by the even dimensional special orthogonal groups $SO(2n)$ of dimension $n(2n-1)$ and rank $n$. The center of the universal covering $Spin(2n)$ is $\mathbb Z_2\times \mathbb Z_2$ if $n=2k$, and $\mathbb Z_4$ if $n=2k+1$.
The Lie algebra is the compact form of $D_n$, $n\geq 4$.
The invariant degrees are $d_i=2i$, $i=1,\ldots,n-1$, $d_n=n$ and the dimensions of the spheres generating the cohomology are $D_i=4i-1$, $i=1,\ldots,n-1$, $D_n=2n-1$. If we choose the standard normalisation, a fundamental system of simple roots in
$\mathbb R^n\simeq H^*_{\mathbb R}$ is given by $\alpha_i=\pmb e_i-\pmb e_{i+1}$, $i=1,\ldots,n-1$, and $\alpha_n=\pmb e_{n_1}+\pmb e_n$.
The corresponding coroots are $\check \alpha_i=\alpha_i$ for $i=1,\ldots,n$, and all have length $\sqrt 2$.
There are $p=n^2-n$ positive coroots. The volume of the torus is
\begin{align}
 V(T^n)=|(\pmb e_1-\pmb e_2)\wedge (\pmb e_{n-1}-\pmb e_n)\wedge (\pmb e_{n-1}+\pmb e_n)|=2.
\end{align}
Thus,
\begin{align}
 V(Spin(2n))=\frac {2^{n^2+1}\pi^{n^2}}{(n-1)!\prod_{i=1}^{n-1} (2i-1)!},
\end{align}
and
\begin{align}
 \frac {V(Spin(2n))}{V(Spin(2n-2))}=\frac {2(2\pi)^{2n-1}}{(2n-2)!}\sim \sqrt {\frac {4\pi}{n-1}} \left(\frac {2\pi e}{2n-2}\right)^{2n-2}.
\end{align}
Since dim$Spin(2n)-$dim$Spin(2n-2)=4n-3$, we have
\begin{align}
\left( \frac {V(Spin(2n))}{V(Spin(2n-2))}\right)^{\frac 1{4n-3}}\sim \left(\frac {2\pi e}{2n}\right)^{1/2},
\end{align}
which, again, shows concentration.
\begin{prop}
Consider the sequence of simple groups $Spin(2n+2)$ endowed with the biinvariant metric.
 Set $B_n=S^{2n+1}\times S^{2n}\equiv Spin(2n+2)/Spin(2n)$ so that $Spin(2n+2)$ looks as a $Spin(2n)$-fibration over $B_n$. Finally, let $S_n$ a bi-equator of $B_n$, and
 \begin{align}
 \iota : S_n \hookrightarrow \mathbb B_n
\end{align}
the corresponding embedding. Then, in the limit $n\to\infty$ the invariant measure $\mu_n$ of $Spin(2n+2)$ concentrates on the codimension two subvariety
\begin{align}
\Sigma_n= \iota^* (Spin(2n+2))
\end{align}
in the sense of Definition~\ref{def2}.
\end{prop}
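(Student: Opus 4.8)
The plan is to follow verbatim the strategy of Proposition~\ref{ConcLocus2} for the odd orthogonal groups, since the even case is structurally identical once the correct sphere decomposition is fixed. The two ingredients are the factorization of the invariant measure along the fibration $Spin(2n)\hookrightarrow Spin(2n+2)\to B_n$, and the classical concentration of the round measure on a high-dimensional sphere about any equator.

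First I would establish the fibration and the measure factorization. The chain of homogeneous fibrations $Spin(2n+2)/Spin(2n+1)\simeq S^{2n+1}$ and $Spin(2n+1)/Spin(2n)\simeq S^{2n}$ exhibits $B_n=Spin(2n+2)/Spin(2n)$ as an iterated sphere bundle of total dimension $4n+1$, and by the explicit construction of the bi-invariant measure in \cite{CaDaPiSc} one obtains that the normalised Haar measure factorises as
\begin{align}
 d\mu_{Spin(2n+2)}=d\mu_{Spin(2n)}\times dm_{S^{2n+1}}\times dm_{S^{2n}},
\end{align}
where $dm$ denotes the round (Lebesgue) measure on each sphere factor. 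This is exactly the analogue of the factorisation used in the proof of Proposition~\ref{ConcLocus2}; only the dimensions of the two spheres change, from $2n,2n-1$ there to $2n+1,2n$ here.

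Next I would invoke the classical fact (see \cite{Le}, \cite{ledoux}) that the normalised round measure on $S^m$ concentrates on any equator: in the latitude coordinate $\phi\in[-\pi/2,\pi/2]$ the density is proportional to $\cos^{m-1}\phi\, d\phi$, so for any fixed $\varepsilon>0$ the measure of the complement of the $\varepsilon$-tube about the equator is bounded by $\int_\varepsilon^{\pi/2}\cos^{m-1}\phi\, d\phi$, which decays like $e^{-(m-1)\varepsilon^2/2}$ and hence vanishes as $m\to\infty$. Applying this to both sphere factors $S^{2n+1}$ and $S^{2n}$ simultaneously, and using the product structure above, the measure induced on $B_n$ concentrates on the product of the two equators, i.e. on the bi-equator $S_n$, which has codimension two. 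Pulling back along the fibration then gives that the full measure $\mu_n$ on $Spin(2n+2)$ concentrates on $\Sigma_n=\iota^*(Spin(2n+2))$, the $Spin(2n)$-bundle over $S_n$, a codimension-two subvariety, in the sense of Definition~\ref{def2}.

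The only genuine obstacle is the measure factorisation: one must verify that the measure induced on $B_n$ from the bi-invariant Haar measure of $Spin(2n+2)$ really is the product of the two round sphere measures, rather than some other measure on the sphere bundle. This is supplied by the vielbein construction of \cite{CaDaPiSc,BeCaCe}, exactly as in the unitary and odd orthogonal cases; granting it, the concentration statement is immediate from the sphere concentration applied to each factor. I would also remark that, since both factors concentrate, a fixed $\varepsilon$ already suffices, and one may even let $\varepsilon_n\to 0$ provided $n\varepsilon_n^2\to\infty$, matching the rate requirement in Definition~\ref{def2}.
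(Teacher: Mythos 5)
Your proposal follows essentially the same route as the paper: the paper leaves this case without an explicit proof precisely because it is the verbatim analogue of Proposition~\ref{ConcLocus2}, namely the factorisation $d\mu_{Spin(2n+2)}=d\mu_{Spin(2n)}\times dm_{S^{2n+1}}\times dm_{S^{2n}}$ obtained from the constructions in \cite{CaDaPiSc}, followed by the classical concentration of the round measures on the equators of the two sphere factors. Your dimension check ($4n+1$) and the remark on the admissible rate $n\varepsilon_n^2\to\infty$ are consistent with the paper's discussion, so the argument is correct and matches the intended proof.
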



This exhausts the classical series. Further considerations can be made by using the Riemannian structure analysed in Sec. \ref{app:Ricci Tensor}. Here we limit ourselves to notice that in principle we can construct a huge number of
Levy families as a consequence of Theorem 1.2, page 844 of \cite{GrMi}:
\begin{cor}\label{cor:2}
 Let $Y_i=(X_i,g_i, \mu_i)$ a family of compact Riemannian spaces with natural normalised Riemannian measures. Assume there is a positive constant $c>0$ such that definitely $R_i\geq c$, where
\begin{align}
 R_i=\inf Ric_i(\tau,\tau)
\end{align}
taken in the set of all tangent vectors of unit length.
 Consider any sequence of positive constants $c_i$
 such that
 \begin{align}
 \lim_{i\to\infty} c_i=\infty.
 \end{align}
 Then, the new family
 \begin{align}
 \tilde Y_i=(X_i, \tilde g_i, \mu_i), \qquad\ \tilde g_i=\frac 1{c_i} g_i
 \end{align}
 is Levy.
\end{cor}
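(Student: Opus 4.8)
The plan is to reduce the statement to the Gromov--Milman criterion recalled at the beginning of Section~\ref{app:Ricci Tensor}, by tracking how the quantity $R_i=\inf Ric_i(\tau,\tau)$ behaves under the constant rescaling $\tilde g_i=c_i^{-1}g_i$. The key observation is that multiplying a metric by a positive constant leaves the Levi-Civita connection, and hence the Riemann and Ricci curvature tensors \emph{as tensors}, unchanged; only the notion of unit length and the induced measure are affected.

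First I would record the measure invariance. Since $\tilde g_i=c_i^{-1}g_i$ with $c_i$ constant on $X_i$, the Riemannian volume form rescales by the overall factor $c_i^{-\dim X_i/2}$, which cancels upon normalization. Hence the normalized Riemannian measure of $\tilde g_i$ equals $\mu_i$, consistent with the statement of the corollary, so the measure-theoretic data of $\tilde Y_i$ is identical to that of $Y_i$ and no new verification is needed on the measure side.

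Next I would compute $\tilde R_i$. Because the Christoffel symbols are unchanged under constant rescaling, the Ricci tensor as a $(0,2)$-tensor satisfies $\widetilde{Ric}_i=Ric_i$. A vector $\tau$ has $\tilde g_i$-unit length precisely when $g_i(\tau,\tau)=c_i$, i.e. $\tau=\sqrt{c_i}\,\sigma$ for a $g_i$-unit vector $\sigma$. Therefore $\widetilde{Ric}_i(\tau,\tau)=c_i\,Ric_i(\sigma,\sigma)$, and taking the infimum over $\tilde g_i$-unit vectors (equivalently over $g_i$-unit vectors $\sigma$, using $c_i>0$) yields $\tilde R_i=c_i R_i$.

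Finally, by hypothesis there is $c>0$ with $R_i\geq c$ for all sufficiently large $i$, while $c_i\to\infty$. Hence $\tilde R_i=c_i R_i\geq c\,c_i\to+\infty$, and the Gromov--Milman theorem immediately gives that $\tilde Y_i$ is a Levy family. I expect no genuine obstacle here: the only point requiring care is the bookkeeping of the scaling of the infimum over unit-length tangent vectors (the factor $c_i$ arising from the length normalization rather than from the tensor itself), together with the remark that the normalized measure is unaffected, so that the hypotheses of the Gromov--Milman criterion apply verbatim to $\tilde Y_i$.
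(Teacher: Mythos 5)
Your proposal is correct and follows essentially the same route as the paper's proof, which simply asserts $\tilde R_i=c_i R_i$ and concludes $\tilde R_i\to+\infty$ via the Gromov--Milman criterion; you merely make explicit the scaling bookkeeping (invariance of the Ricci tensor under constant rescaling, the $\sqrt{c_i}$ renormalization of unit vectors, and the invariance of the normalized measure) that the paper labels as obvious.
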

\begin{proof}
 Obviously $\tilde R_i=c_i R_i$. Since definitely $R_i\geq c$, we have $\lim_{i\to\infty} \tilde R_i=+\infty$.
\end{proof}
\section{Further comments and conclusions}
In a companion paper, \cite{CU21}, we have introduced the notion of ``concentration locus'' for sequences of groups $G_n$, $G_n\subseteq G_{n+1}$, endowed with normalized invariant measures. Then, we have shown in which sense the mapping of the
concentration locus on a set through its action on that set governs the concentration of the measure on the set and eventually determines the presence of a fixed point. Here we have seen how a concentration locus can be determined for the classical series
of compact Lie groups. This loci can have unboundedly increasing codimension and determine probes for analysing the action of some infinite dimensional Lie groups on (non necessarily) compact sets or manifolds. We remark that the question about
extreme amenability of $SU(\infty)$, for example, is still an open problem, \cite{Pe2}. The result we obtained for the classical series can be easily extended to more general sequences of compact Lie groups.
\begin{prop}
 Let $\{G_n\}$ a family of connected compact Lie groups of the form
\begin{align}\label{caso generale}
G_n= G_n^{(1)}\times \cdots \times G_n^{(k_n)}\times T^{s_n}/Z_{G_n},
\end{align}
where $T^{s_n}$ is a torus of dimension $s_n$, $G_n^{(1)}\times \cdots \times G_n^{(k_n)}$ is the product of $k_n$ compact connected simple Lie group and $Z_{G_n}$ is a finite subgroup.
Suppose that among the factor of $G_n^{(1)}\times \cdots \times G_n^{(k_n)}$ it exists a finite dimensional connected compact Lie group $G_0$ common to all $n$. Alternatively assume that $s_n\neq 0$ for $n>n_0$. Then, it exists at least
a finite dimensional compact manifold $K$ admitting an equicontinuous action of $G_\infty$, taken with the inductive limit topology, without fixed points. \\
If at least one of the $G_n^{j_n}$ determines a classical sequence $\{G_n^{j_n} \}_{n\in N}$ of compact Lie groups, then a concentration locus of $G_n$ is obtained restricting the factors $G_n^{j_n}$ to the corresponding concentration loci.
\end{prop}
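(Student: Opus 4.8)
The plan is to prove the two assertions by separate mechanisms. For the first, the key observation is that a continuous homomorphism $\phi\colon G_\infty\to H$ onto a nontrivial compact Lie group $H$ automatically yields the desired manifold: one takes $K=H$ with the action $g\cdot k=\phi(g)\,k$. Fixing a bi-invariant metric on $H$, every $\phi(g)$ acts as an isometry, so the $G_\infty$-action is by isometries and in particular equicontinuous; and a point $k_0$ fixed by all of $G_\infty$ would satisfy $\phi(g)k_0=k_0$, hence $\phi(g)=e$, for every $g$, contradicting the nontriviality of the image. Thus the whole of the first claim reduces to producing such a $\phi$ under each of the two hypotheses.

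Under the common-factor hypothesis I would use the projection of each $G_n$ onto its $G_0$-factor. Since $Z_{G_n}$ is finite and central, its image in $G_0$ is a finite central subgroup $Z_0\subseteq G_0$, and the projection descends to a surjection $G_n\to G_0/Z_0$ onto the nontrivial compact Lie group $H=G_0/Z_0$; as the embeddings of the inductive system preserve the $G_0$-factor, these surjections are compatible and pass to the limit, giving the required $\phi\colon G_\infty\to H$. Under the alternative hypothesis $s_n\neq0$ for $n>n_0$, I would instead compose the projection onto the torus factor with a character $T^{s_n}\to S^1$ chosen compatibly with the embeddings, obtaining $\phi\colon G_\infty\to S^1$ and taking $H=S^1$ with its rotation action.

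For the concentration statement the plan is to pass to the covering group $\widetilde G_n=G_n^{(1)}\times\cdots\times G_n^{(k_n)}\times T^{s_n}$, on which the invariant measure is the product of the invariant measures of the factors and the quotient map $\widetilde G_n\to G_n$ is a finite, measure-preserving local isometry, so that it suffices to argue upstairs. If $G_n^{j_n}$ is one of the classical sequences, let $\Sigma_n^{j_n}$ be its concentration locus of codimension $c\in\{1,2\}$, as produced in the Propositions of Section \ref{ConcLocLieGroup}, and set
\[
\Sigma_n=\Sigma_n^{j_n}\times\prod_{j\neq j_n}G_n^{(j)}\times T^{s_n},
\]
a submanifold of codimension $c>0$. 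Because the metric is a product metric, a tubular neighbourhood of $\Sigma_n$ contains the product of a tubular neighbourhood of $\Sigma_n^{j_n}$ with all the remaining full factors; since those factors carry total mass $1$, the measure of the complement of the tube around $\Sigma_n$ is bounded by the measure of the complement of the tube around $\Sigma_n^{j_n}$, which tends to $0$ by the concentration already established for the classical factor. As $\Sigma_n$ is invariant under the central subgroup $Z_{G_n}$, it descends to a concentration locus of $G_n$ of the same codimension, in the sense of Definition \ref{def2}.

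The step I expect to be the main obstacle is the verification that the homomorphisms $\phi$ are genuinely compatible with the embeddings defining the inductive limit --- in particular, in the torus case, that a single character can be chosen consistently as $n$ grows, which is exactly what the structural hypotheses on the sequence are meant to guarantee. Once $\phi$ is in hand, the remainder is a routine consequence of the product structure of the Haar measure and of the concentration results already proved for the classical series.
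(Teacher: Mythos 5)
The paper leaves this proof to the reader, indicating only that $K=G_0$ or $K=S^1$ for the first claim and that the second claim is a corollary of the concentration results for the classical series; your write-up fills in exactly this intended argument (action through the projection onto the common factor or the torus, and the product structure of the Haar measure for the concentration locus) and is correct. The only point to tidy is that the central image $Z_0=p(Z_{G_n})$ may a priori vary with $n$, which is repaired by mapping uniformly onto the adjoint form $G_0/Z(G_0)$ (finite center since $G_0$ is simple), consistent with the compatibility caveat you already flagged.
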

The proof is simple and is left to the reader. In the first part obviously, $K=G_0$ or $K=S^1$. It generalizes the known result that $U(\infty)$ is not extremely amenable. The second part is just a corollary of our results in the previous sections.\\
It would be interesting to relate the concentration of the measure around concentration loci to the phenomenon of optimal transport. We expect such a connection to be governed by the way the process of concentration around concentration loci is
realized in our examples. We plan to investigate such connection in a future work.

\section*{Acknowledgenments}
The second author gratefully acknowledges partial support from the projects MEGABIT -- Universit\`{a} degli Studi di Catania, PIAno di inCEntivi per la RIcerca di Ateneo 2020/2022 (PIACERI), Linea di intervento 2.



\end{document}